\def\@tocline#1#2#3#4#5#6#7{\relax
  \ifnum #1>\c@tocdepth % then omit
  \else
    \par \addpenalty\@secpenalty\addvspace{#2}%
    \begingroup \hyphenpenalty\@M
    \@ifempty{#4}{%
      \@tempdima\csname r@tocindent\number#1\endcsname\relax
    }{%
      \@tempdima#4\relax
    }%
    \parindent\z@ \leftskip#3\relax \advance\leftskip\@tempdima\relax
    \rightskip\@pnumwidth plus4em \parfillskip-\@pnumwidth
    #5\leavevmode\hskip-\@tempdima
      \ifcase #1
       \or\or \hskip 2em \or \hskip 2em \else \hskip 3em \fi%
      #6\nobreak\relax
    \hfill\hbox to\@pnumwidth{\@tocpagenum{#7}}\par
    \nobreak
    \endgroup
  \fi}
\newcommand{\LeftEqNo}{\let\veqno\@@leqno}
\newlength{\lowerhalftmp}
\crefname{subsection}{subsection}{subsections}
\Crefname{subsection}{Subsection}{Subsections}
\newtheorem{theorem}{Theorem}[section]
\newtheorem{definition}[theorem]{Definition}
\newtheorem{lemma}[theorem]{Lemma}
\newtheorem{prop}[theorem]{Proposition}
\newtheorem{proposition}[theorem]{Proposition}
\newtheorem{corollary}[theorem]{Corollary}
\theoremstyle{definition}
\newtheorem{remark}[theorem]{Remark}
\renewcommand{\epsilon}{\varepsilon}
\DeclareMathAlphabet{\mathpzc}{OT1}{pzc}{m}{it}
\newcommand{\Z}{\mathbb{Z}}
\newcommand{\CP}{\mathbb{P}}
\renewcommand{\qed}{$\hfill \square$ \smallskip \\ }
\renewcommand{\bar}{\overline}
\begin{document}
	\thispagestyle{empty}
	\title{The Dold-Whitney theorem and the Sato-Levine invariant}
	\author{Andrew Lobb} 
	\address{Mathematical Sciences\\
		Durham University\\
		UK.}
	\email{andrew.lobb@durham.ac.uk}
	\urladdr{http://www.maths.dur.ac.uk/users/andrew.lobb/}

\begin {abstract} We use the Dold-Whitney theorem classifying $SO(3)$-bundles over a $4$-complex to give a mod 4 obstruction to a 2-component link of trivial linking number being slice.  It turns out that this coincides with the reduction of the Sato-Levine invariant.
\end {abstract}

\maketitle

\section{Introduction}

Let $L$ be a $2$-component link in $S^3$ with trivial linking number.  Choose a Seifert surface for each component of $L$ that misses the other component and such that the surfaces intersect transversely.  The intersection of the two Seifert surfaces gives a framed link in $S^3$.  Such a framed link determines a homotopy class of maps $S^3 \rightarrow S^2$ by the Pontryagin-Thom construction.

\begin{definition}
The Sato-Levine invariant of $L$ is the corresponding group element of $\pi_3(S^2) = \Z$.
\end{definition}

This definition first appears in \cite{Sato}.  The non-vanishing of the Sato-Levine invariant of $L$ provides an obstruction to the link $L$ bounding disjoint locally flat discs in the $4$-ball (in other words, an obstruction to $L$ being slice).

In this paper we give a combinatorially-defined obstruction $\phi(L) \in \Z/4\Z$ to $L$ being slice.  It turns out to be equal to the modulo $4$ reduction of the Sato-Levine invariant.

Nevertheless, the proofs of the well-definedness and properties of $\phi$ are straightforward and direct.  The intermediate construction used in the proofs is a flat $SO(3)$ connection on a $4$-manifold.  The result follows from an application of the Dold-Whitney theorem (which classifies all $SO(3)$ bundles over a $4$-complex by their characteristic classes).

\begin{theorem}[Dold-Whitney \cite{DW}]
	Let $X$ be a 4-dimensional CW-complex.  A principal $SO(3)$ bundle $E$ over $X$ is determined by the pair consisting of its Pontryagin class $p_1(E) \in H^4(X;\Z)$ and second Steifel-Whitney class $w_2(E) \in H^2(X; \Z/2\Z)$.  Furthermore there is an $SO(3)$ bundle $E$ realizing $p_1(E) = a$ and $w_2(E) = b$ exactly when
	\[ \bar{a} = b^2 \in H^4(X; \Z/4\Z) \]
	where we write $\bar{a}$ for the reduction of $a$ and where the squaring of $b$ is the Pontryagin squaring operation.
\end{theorem}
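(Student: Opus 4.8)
The plan is to translate the problem into homotopy theory and run obstruction theory on the Postnikov tower of $BSO(3)$. Isomorphism classes of principal $SO(3)$-bundles over $X$ are the homotopy classes $[X,BSO(3)]$. Since $\dim X = 4$ and the natural map $BSO(3)\to P_4BSO(3)$ to the fourth Postnikov section is a $5$-equivalence, the induced map $[X,BSO(3)]\to[X,P_4BSO(3)]$ is a bijection. The homotopy groups $\pi_2(BSO(3))=\pi_1(SO(3))=\Z/2\Z$, $\pi_3(BSO(3))=\pi_2(SO(3))=0$, and $\pi_4(BSO(3))=\pi_3(SO(3))=\Z$ show that $P_4BSO(3)$ is a two-stage system: a principal $K(\Z,4)$-fibration over $K(\Z/2\Z,2)$ classified by a single $k$-invariant $\kappa\in H^5(K(\Z/2\Z,2);\Z)$. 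The projection to $K(\Z/2\Z,2)$ records $w_2$, while the class $p_1\in H^4(P_4BSO(3);\Z)\cong H^4(BSO(3);\Z)=\Z$ records the Pontryagin class.

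The computational heart is to identify $\kappa$. I would first compute $H^5(K(\Z/2\Z,2);\Z)\cong\Z/4\Z$, generated by $\wt\beta\,\mathfrak{P}(\iota_2)$, the image under the integral Bockstein of the sequence $0\to\Z\xrightarrow{4}\Z\to\Z/4\Z\to0$ of the Pontryagin square $\mathfrak{P}(\iota_2)\in H^4(K(\Z/2\Z,2);\Z/4\Z)$ of the fundamental class. I would then pin down $\kappa$ as this generator by naturality, restricting along a test map from a space carrying an explicit bundle with known characteristic classes (for instance a bundle over $\mathbb{CP}^2$) to fix the coefficient. This identification, together with the fibre restriction $\iota^*p_1=\pm4\,\iota_4$ of $p_1$---which follows from the fact that a generator of $\pi_3(SO(3))$ is the adjoint of the basic $SU(2)$-instanton, whose adjoint bundle has $p_1=-4c_2=\mp4$---is what ultimately produces both the factor of $4$ and the Pontryagin squaring in the statement.

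Granting this, realizability is immediate. A class $b\in H^2(X;\Z/2\Z)$, i.e.\ a map $f_b\colon X\to K(\Z/2\Z,2)$, lifts to $P_4BSO(3)$ if and only if $f_b^*\kappa=\wt\beta\,\mathfrak{P}(b)=0$ in $H^5(X;\Z)$. By the Bockstein long exact sequence this vanishes precisely when $\mathfrak{P}(b)=b^2$ lies in the image of the reduction $H^4(X;\Z)\to H^4(X;\Z/4\Z)$, that is, exactly when there is an $a$ with $\bar a=b^2$. Tracking $p_1$ through the fibre action, which shifts it by $4H^4(X;\Z)$ on account of $\iota^*p_1=\pm4\,\iota_4$, then shows that the set of Pontryagin classes realised over a fixed liftable $b$ is precisely the coset $\{a:\bar a=b^2\}$, giving the realizability half of the theorem.

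For the first assertion---that $(p_1,w_2)$ is a complete invariant---two bundles with equal $w_2$ are lifts of one map $f_b$, so they differ by the action of $H^4(X;\Z)$ on the set of lifts, and $\iota^*p_1$ shows that $p_1$ detects this difference. The main obstacle is precisely the bookkeeping here: one must show that the set of homotopy classes of lifts collapses exactly along the fibres of $p_1$, controlling the indeterminacy of the obstruction-theoretic difference coming from self-homotopies of $f_b$ and from any $4$-torsion in $H^4(X;\Z)$. This is the delicate step, and it is where the Pontryagin-square structure re-enters; over the $4$-manifolds to which the theorem is applied $H^4$ is torsion-free, and there $p_1$ separates the lift classes without ambiguity.
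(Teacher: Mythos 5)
The paper offers no proof of this statement---it is imported verbatim from the cited reference [DW] and used as a black box---so there is no internal argument to compare against, and I judge your proposal on its own terms. Your Postnikov architecture is the standard modern route, and the individual facts you quote are correct: the homotopy groups of $BSO(3)$, the computation $H^5(K(\Z/2\Z,2);\Z)\cong\Z/4\Z$ with generator $\tilde\beta\,\mathfrak{P}(\iota_2)$, and the fibre restriction $\iota^*p_1=\pm4\iota_4$. The first genuine gap is the step you yourself call the computational heart: identifying the $k$-invariant $\kappa$. The method you propose---naturality against a test bundle over $\CP^2$---cannot work even in principle, because $\kappa$ lives in $H^5(K(\Z/2\Z,2);\Z)$ and $H^5$ of any $4$-dimensional complex vanishes; every bundle over a $4$-complex pulls $\kappa$ back to zero no matter what $\kappa$ is, so no such test constrains the coefficient. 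What is actually needed is a universal argument: for instance, prove the relation $\bar{p}_1=\mathfrak{P}(w_2)$ in $H^4(BSO(3);\Z/4\Z)$, deduce $w_2^*\bigl(\tilde\beta\,\mathfrak{P}(\iota_2)\bigr)=\tilde\beta(\bar{p}_1)=0$, and use the Serre exact sequence of the fibration $BSU(2)\to BSO(3)\xrightarrow{w_2}K(\Z/2\Z,2)$ to see that $\ker w_2^*$ in degree $5$ is the cyclic group generated by $\kappa$ (the transgression of $c_2$); since $\tilde\beta\,\mathfrak{P}(\iota_2)$ generates $\Z/4\Z$, this pins $\kappa$ down. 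Without something of this kind, the factor of $4$ and the Pontryagin square are assumed rather than derived.

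The second gap is the uniqueness clause, which you defer when $H^4(X;\Z)$ has torsion in the hope that it is bookkeeping. It is not: the statement as quoted, for arbitrary $4$-dimensional CW complexes, is false, so no bookkeeping can close it. Take $X=S^3\cup_4 e^4$, so that $H^4(X;\Z)\cong\Z/4\Z$ and $H^1(X;\Z/2\Z)=H^2(X;\Z/2\Z)=0$. Your own framework computes $[X,BSO(3)]$: every map to $K(\Z/2\Z,2)$ is null, the indeterminacy action of $H^1(X;\Z/2\Z)$ on lifts is trivial, hence $[X,BSO(3)]\cong H^4(X;\Z)$ has four elements; yet all four bundles have $w_2=0$ and $p_1=\pm4c=0$ because $H^4(X;\Z)$ is $4$-torsion. (Concretely, the four $SU(2)$-bundles distinguished by $c_2\in H^4(X;\Z)$ remain distinct as $SO(3)$-bundles since $H^1(X;\Z/2\Z)=0$, but each has $p_1=-4c_2=0$.) So $(w_2,p_1)$ is not a complete invariant over general $4$-complexes: injectivity holds only when the $4$-torsion of $H^4(X;\Z)$ is absorbed by the self-homotopy indeterminacy, e.g.\ when $H^4(X;\Z)$ is torsion-free. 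This is harmless for the paper, which applies the theorem only to closed orientable $4$-manifolds, but it means your argument---even after the $k$-invariant is supplied---can only prove the theorem under such a hypothesis, and your closing hedge is flagging a defect of the quoted statement itself rather than a removable inefficiency of your proof.
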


In essence, we are giving an essentially 4-dimensional proof of the invariance and properties of a reduction of the Sato-Levine invariant.

\subsection*{Acknowledgements}
We thank the Max Planck Institute for their hospitality and thank the colleagues there who showed such an interest in this interpretation of a well-known invariant.

\section{Definition and properties}

Let $L$ be an oriented link in $S^3$ of trivial linking number comprising two components $K_1$ and $K_2$.  Then there certainly exist two disjoint locally flat immersed discs in the $4$-ball $B^4$, bounded by $L$, where the discs are boundary-transverse and oriented consistently with $L$.  Let $D_1$ and $D_2$ be two such discs.

\begin{definition}
\label{defn:whatitis}
To each self-intersection point $p \in B^4$ of $D_1$ or $D_2$ we associate a number $i(p) \in \{ -1, 0 , 1 \}$ as follows.

Let $\{ s,t \} = \{ 1,2 \}$, and suppose that $p$ is a self-intersection point of $D_s$.  Choose a loop $l$ which starts and ends at $p \in B^4$, staying on $D_s$ and starting and ending on different branches of the intersection.  Then we set
\[ w(p) : = [l] \in H_1(B^4 \setminus D_t ; \Z/2\Z) = \Z/2\Z = \{ 0 , 1 \} {\rm .} \]
Note that this is independent of the choice of $l$.

We define
\[ i(p) = w(p) \sigma(p) \]
where $\sigma(p) = \pm 1$ is the sign of the intersection at $p$.
\end{definition}

\begin{definition}
We define

\[ \phi(L, D_1, D_2) = \sum_p i(p) \in \Z/4\Z \]

\noindent where the sum is taken over all the self-intersections $p$ of $D_1$ and $D_2$.
\end{definition}

\begin{remark}
	The fact that $\phi$ is the reduction of the Sato-Levine invariant may be deduced from this definition and the crossing-change formula due to Jin \cite{Jin} and Saito \cite{Saito}.
\end{remark}

We shall show the following
\begin{proposition}
	\label{prop:gluing}
	Suppose that $L$ bounds the two pairs of disjoint locally flat immersed discs $(D_1, D_2)$ and $(D'_1, D'_2)$.  Then there exists a closed $4$-manifold $X$ with a flat $SO(3)$-bundle $E \rightarrow X$ with
	\[ \phi(L, D_1, D_2) - \phi(L,D'_1,D'_2) = w_2^2(E) = p_1(E) = 0 \in \Z / 4\Z = H^4(X ; \Z / 4\Z) \rm{.} \]
\end{proposition}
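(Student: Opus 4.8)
The plan is to realise the difference $\phi(L,D_1,D_2)-\phi(L,D_1',D_2')$ as the Pontryagin square of the second Stiefel–Whitney class of a flat $SO(3)$-bundle, and then to use flatness to see that this quantity must vanish. First I would produce the closed manifold $X$ by gluing the ball $B^4$ containing $(D_1,D_2)$ to a second copy of $B^4$ containing $(D_1',D_2')$, the second copy taken with reversed orientation, along an identification of their boundary three-spheres carrying $L$ to $L$. Reversing the orientation on the second copy is exactly what turns the sum over the primed double points into a subtraction, since it negates the self-intersection signs $\sigma(p)$ while leaving the $\Z/2$-valued weights $w(p)$ untouched. Capping the discs against one another along $L$ produces two closed immersed surfaces $\Sigma_1 = D_1\cup_{K_1} D_1'$ and $\Sigma_2 = D_2\cup_{K_2} D_2'$, whose double points are precisely the self-intersections counted by $\phi$ and $\phi'$, so that the total signed-and-weighted double-point count over $\Sigma_1\cup\Sigma_2$ is $\phi(L,D_1,D_2)-\phi(L,D_1',D_2')$.

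Next I would equip the complement of $\Sigma_1\cup\Sigma_2$ with a flat $SO(3)$-connection. Let $V=\{1,R_x,R_y,R_z\}\subset SO(3)$ be the Klein four-group of $\pi$-rotations about the coordinate axes, that is, the image of the quaternion group under the double cover $SU(2)\to SO(3)$. I would define a homomorphism from the fundamental group of the complement into $V$ sending a meridian of $\Sigma_1$ to $R_x$ and a meridian of $\Sigma_2$ to $R_y$. This is well defined precisely because the linking number of $K_1$ and $K_2$ vanishes and because $V$ is abelian, so all the meridian relations (including the commuting relations at the double points) are respected. Since the meridian holonomies are involutions, the flat structure can be arranged to extend once the surfaces are incorporated into the closed manifold, producing a genuinely flat $SO(3)$-bundle $E\to X$ whose class $w_2(E)$ is Poincaré dual to $[\Sigma_1]+[\Sigma_2]$.

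The heart of the argument is the identification $w_2^2(E)=\phi(L,D_1,D_2)-\phi(L,D_1',D_2')$ in $\Z/4\Z = H^4(X;\Z/4\Z)$, with $w_2^2$ the Pontryagin square. I would compute the Pontryagin square of the surface-dual class $w_2(E)$ by the quadratic-refinement recipe, localising the contribution at each double point $p$ of $\Sigma_1$ and $\Sigma_2$. At such a point the two crossing sheets both carry the meridian holonomy of their own surface, but the framings in which they meet differ by the holonomy of the connecting loop $l$ around the other surface, namely $R_y^{w(p)}$ or $R_x^{w(p)}$. A local model should show that this framing defect contributes nothing when $w(p)=0$ and contributes $\sigma(p)$ when $w(p)=1$, so that the total is $\sum_p w(p)\sigma(p)=\sum_p i(p)$, which is exactly the claimed difference.

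Finally I would invoke flatness. Since $E$ carries a flat connection, its Pontryagin class is represented by a vanishing Chern–Weil form, and as $H^4(X;\Z)\cong\Z$ is torsion-free this forces $p_1(E)=0$. The Dold–Whitney constraint then gives $w_2^2(E)=\overline{p_1(E)}=0$, and combining this with the computation above yields $\phi(L,D_1,D_2)-\phi(L,D_1',D_2')=0$. I expect the main obstacle to be twofold. First, arranging $X$ and the representation so that $E$ is honestly flat while $w_2(E)$ remains $\Z/2$-essential — so that $w_2^2$ is not vacuously zero — which likely forces $X$ to be built by a branched-cover or surgery modification rather than the naive double $S^4$. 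Second, the careful local analysis at the double points that pins the Pontryagin-square contribution to $i(p)=w(p)\sigma(p)$, together with the orientation bookkeeping across the two halves that produces the minus sign in the difference.
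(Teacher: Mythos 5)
Your overall strategy matches the paper's in outline: glue the two pairs of discs along $L$ (reversing the orientation of the second ball to produce the minus sign), obtain immersed spheres $\Lambda = \Sigma_1 \cup \Sigma_2$, use a flat Klein-four-group representation sending the two families of meridians to distinct involutions, identify $\phi(L,D_1,D_2) - \phi(L,D'_1,D'_2)$ with the Pontryagin square $w_2^2(E)$, and kill it via flatness plus Dold--Whitney. But there is a genuine gap exactly where you flag your ``main obstacle,'' and the claim you substitute for the missing construction is false rather than merely unproven. A flat connection on the complement of $\Sigma_1 \cup \Sigma_2$ with nontrivial meridian holonomy can never ``extend once the surfaces are incorporated into the closed manifold'': in a filled-in neighborhood of a surface point the meridian bounds a small disc meeting the surface once, hence becomes null-homotopic there, so any flat extension forces the meridian holonomy to be trivial. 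Consequently in your ambient double (which is $S^4$) there is no flat bundle with the properties you want; moreover $H^2(S^4;\Z/2\Z) = 0$, so the assertion that $w_2(E)$ is Poincar\'e dual to $[\Sigma_1]+[\Sigma_2]$ is vacuous, and your proposed ``local model at double points'' for the Pontryagin square has nothing well-defined to compute.

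The idea your proposal is missing is the paper's blow-up-then-surgery construction of $X_\Lambda$. Blow up each negative double point of $\Lambda$ with $\overline{\CP}^2$ and each positive one with $\CP^2$; the proper transform $\bar{\Lambda}$ is then embedded, and each exceptional sphere meets it once positively and once negatively, so each component of $\bar{\Lambda}$ is null-homologous. Now remove $\bar{\Lambda} \times D^2$ and glue in two copies of $D^3 \times S^1$: the surfaces are excised rather than filled in, so the meridians survive as generators of $H_1(X_\Lambda;\Z) = \Z^2$ and the representation to $V_4$ defines an honestly flat $SO(3)$-bundle on the closed manifold. Furthermore $H_2(X_\Lambda;\Z)$ is freely generated by embedded tori $T_p$, one per double point, of self-intersection equal to the sign of that double point (each torus is built from the punctured exceptional sphere and a tube running along $\bar{\Lambda}$). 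Your local analysis at double points is then replaced by something cleaner: the restriction of the representation to $\pi_1(T_p)$ sends the meridian generator to $x_j$ and a dual circle to an element outside $\{1, x_j\}$ exactly when $w(p) = 1$, so a lemma on flat $V_4$-bundles over the torus gives $w_2(E_\theta\vert_{T_p}) = w(p)$. Since the intersection form is diagonal on the classes $[T_p]$, the Pontryagin square is
\[ w_2^2(E_\theta) = \sum_p w(p)\, \overline{[T_p]} \cup \overline{[T_p]} = \sum_p w(p)\sigma(p) = \phi(L, D_1, D_2) - \phi(L,D'_1,D'_2) {\rm .} \]
Your final step (flatness gives $p_1 = 0$, and Dold--Whitney gives $w_2^2 = \bar{p}_1 = 0$) agrees with the paper. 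So: right skeleton, correctly identified obstacle, but the construction that makes the argument non-vacuous --- sign-matched blow-ups followed by surgery on the proper transform --- is absent, and the flat-extension claim you offer in its place cannot be repaired as stated.
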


From this proposition we immediately obtain a corollary.

\begin{corollary}
\label{cor:obstruction}
The quantity $\phi(L, D_1, D_2)$ depends only on the link $L$.  So we can write $\phi(L) = \phi(L,D_1,D_2)$.  Furthermore, if $\phi(L) \not= 0$ then $L$ does not bound two disjoint embedded locally flat discs in $B^4$. \qed
\end{corollary}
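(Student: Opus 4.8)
The plan is to obtain both assertions of the corollary as formal consequences of Proposition~\ref{prop:gluing}, whose essential content is that the difference $\phi(L,D_1,D_2) - \phi(L,D_1',D_2')$ vanishes in $\Z/4\Z$.

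For the first assertion I would take two arbitrary pairs of disjoint locally flat immersed discs $(D_1,D_2)$ and $(D_1',D_2')$ bounded by $L$; at least one such pair exists by the observation opening this section, so $\phi(L,D_1,D_2)$ is defined to begin with. Proposition~\ref{prop:gluing} produces a closed $4$-manifold $X$ together with a flat $SO(3)$-bundle $E$ on it for which $\phi(L,D_1,D_2) - \phi(L,D_1',D_2') = 0$. Since the two pairs were arbitrary, the quantity $\phi(L,D_1,D_2)$ does not depend on the chosen discs, and we may legitimately set $\phi(L) := \phi(L,D_1,D_2)$.

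For the second assertion I would argue by contraposition. Suppose $L$ bounds two disjoint embedded locally flat discs $(D_1',D_2')$. Since each $D_i'$ is embedded and the two are disjoint, there are no self-intersection points of $D_1'$ or $D_2'$ whatsoever. The sum defining $\phi(L,D_1',D_2')$ is therefore taken over the empty set, giving $\phi(L,D_1',D_2') = 0$. By the well-definedness just established, $\phi(L) = \phi(L,D_1',D_2') = 0$. Equivalently, $\phi(L) \neq 0$ forces the nonexistence of any such pair of disjoint embedded locally flat discs, which is exactly the stated obstruction.

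All of the substantive work is carried by Proposition~\ref{prop:gluing}, so the deduction above is entirely formal and presents no genuine obstacle. The only point worth flagging explicitly is the passage from the immersed to the embedded setting: ``embedded'' for each disc together with ``disjoint'' for the pair rules out self-intersections completely, so the defining sum for $\phi$ collapses to the empty sum and returns $0$.
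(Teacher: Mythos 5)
Your proposal is correct and is precisely the deduction the paper intends: the paper marks the corollary as immediate from Proposition~\ref{prop:gluing}, and your two steps (well-definedness from the vanishing difference, then vanishing of the empty sum for disjoint embedded discs) fill in exactly that formal argument. No gaps.
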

We note that the content of the equation in Proposition \ref{prop:gluing} is the first equality sign, the second being the Dold-Whitney theorem (the squaring operation here is the Pontryagin square, a $\Z/4\Z$ lift of the cup product), and the third being a consequence of the flatness of the bundle $E$.

\begin{remark}
	\label{rem:saito}
	Work by Saito \cite{Saito} gives a $\Z/4\Z$-valued extension of the Sato-Levine invariant for links of even linking number.  Saito's invariant is constructed via considering the framed intersection of possibly non-orientable Seifert surfaces, and is distinct from that which we consider.
\end{remark}

We devote the following section to the description of the manifold $X$ and the $SO(3)$-bundle $E \rightarrow X$.

\section{Construction of a 4-manifold with an $SO(3)$-bundle}

Given an immersed locally-flat $2$-link $\Lambda \subseteq S^4$ of two components with no intersections between distinct components of the link, we give a construction of a closed diagonal $4$-manifold $X_\Lambda$.

Suppose that $\Lambda$ has $n_-$ negative and $n_+$ positive intersection points.  Then we blow-up each negative intersection point by taking connect sum with $\overline{\CP}^2$ and each positive intersection point by taking connect sum with $\CP^2$.  Let
\[ \overline{\Lambda} \hookrightarrow n_-\bar{\CP}^2 \# n_+\CP^2 \]
be the proper transform of $\Lambda$.

Because of the way we chose to blow-up the negative and positive intersections respectively, each exceptional sphere intersects $\bar{\Lambda}$ in two points, once negatively, and once positively.  Furthermore, since the self-intersections of $\Lambda$ do not occur between the distinct components of $\Lambda$, each exceptional sphere intersects exactly one component of $\bar{\Lambda}$.

This means that each component of $\bar{\Lambda}$ is trivial homologically, and so has a trivial $D^2$-neighborhood.  This allows us to do surgery by removing a neighborhood $\bar{\Lambda} \times D^2$ and gluing in two copies of $D^3 \times S^1$.  We call the resulting manifold $X_\Lambda$.  Now we collect some information about the algebraic topology of $X_\Lambda$.

\begin{prop}
\label{prop:algtop}
The 4-manifold $X_\Lambda$ has diagonal intersection form and satisfies
\begin{align*}
H_1(X_\Lambda ; \Z) = \Z^2, \,\,\,& H_2(X_\Lambda ; \Z) = \Z^{n_+ + n_-},\\
b_2^+ = n_+,\,\,\,& b_2^- = n_- \rm{.}
\end{align*}
\end{prop}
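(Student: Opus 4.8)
The plan is to compute the homology of $X_\Lambda$ by tracking the effect of each operation in the construction: the blow-ups, and the subsequent surgery. I would set up a Mayer-Vietoris or direct cellular computation, starting from the manifold $Y := n_-\bar{\CP}^2 \# n_+\CP^2$ and then analysing the surgery that removes $\bar{\Lambda}\times D^2$ and glues in two copies of $D^3\times S^1$.

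First I would record the homology of $Y$. Since $\CP^2$ and $\bar{\CP}^2$ each contribute a single generator to $H_2$ and nothing to $H_1$, we have $H_1(Y;\Z)=0$ and $H_2(Y;\Z)=\Z^{n_++n_-}$, with diagonal intersection form having $n_+$ positive and $n_-$ negative entries, so that $b_2^+=n_+$ and $b_2^-=n_-$ for $Y$. The point of the blow-up construction is that $\bar\Lambda$ becomes null-homologous (each exceptional sphere meets $\bar\Lambda$ once positively and once negatively, so the intersection pairing of $[\bar\Lambda]$ with each generator vanishes, and on a diagonal form this forces $[\bar\Lambda]=0$), so each of the two components has a trivial normal $D^2$-bundle and the surgery described is well-defined.

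Next I would analyse the surgery via Mayer-Vietoris, writing $X_\Lambda = (Y\setminus(\bar\Lambda\times D^2))\cup (D^3\times S^1\sqcup D^3\times S^1)$ glued along $\bar\Lambda\times S^1 = S^2\times S^1\sqcup S^2\times S^1$ (using that each component of $\bar\Lambda$ is a $2$-sphere after the blow-ups resolve the self-intersections). The glued-in pieces $D^3\times S^1$ each contribute an $S^1$, which should account for the $H_1(X_\Lambda;\Z)=\Z^2$: the two surgeries introduce two independent free generators in degree one. For $H_2$, I would check that removing the neighborhood of $\bar\Lambda$ and capping with $D^3\times S^1$ neither kills nor creates any classes in $H_2$ relative to $Y$, so that $H_2(X_\Lambda;\Z)=\Z^{n_++n_-}$ is inherited unchanged; the exceptional spheres survive as generators. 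Because the surgery is performed away from the support of the intersection form carried by the exceptional classes, the diagonal form and the values $b_2^+=n_+$, $b_2^-=n_-$ persist.

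The main obstacle I expect is bookkeeping in the Mayer-Vietoris sequence: one must verify carefully that the class $[\bar\Lambda]$ being null-homologous implies the surgery does not alter $H_2$, and that the connecting homomorphisms behave as claimed so that exactly two free generators appear in $H_1$ with no torsion introduced. In particular I would need to check that the maps $H_*(\bar\Lambda\times S^1)\to H_*(Y\setminus \bar\Lambda)\oplus H_*(D^3\times S^1\sqcup D^3\times S^1)$ have the expected ranks, which uses the triviality of the normal bundle and the fact that the components are spheres. Once the sequence is pinned down, the stated identities follow directly.
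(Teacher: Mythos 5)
Your Mayer--Vietoris strategy is genuinely different from the paper's proof, which instead constructs explicit embedded closed surfaces in $X_\Lambda$ and then counts handles and Euler characteristics. Your route can be made to work for the group computations: writing $Y = n_-\bar{\CP}^2 \# n_+\CP^2$ and $W = Y \setminus \nu(\bar\Lambda)$, the fact that each $[\bar\Lambda_j] = 0$ in $H_2(Y;\Z)$ gives $H_1(W;\Z) \cong \Z^2$ generated by the two meridians and $H_2(W;\Z) \cong H_2(Y;\Z)$ (here one uses $H_3(\nu,\partial\nu)=0$, since the components of $\bar\Lambda$ are spheres), and gluing in the two copies of $D^3\times S^1$ then yields $H_1(X_\Lambda)\cong\Z^2$ and $H_2(X_\Lambda)\cong\Z^{n_++n_-}$ as you outline. (Your phrasing that the surgeries ``introduce'' the two degree-one generators is loose: the generators are the meridians, which already live in $W$ and are simply not killed by the new filling; the exact sequence sorts this out.)

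However, your treatment of the intersection form contains a genuine error. You assert that ``the exceptional spheres survive as generators'' and that ``the surgery is performed away from the support of the intersection form carried by the exceptional classes.'' This is false, and it contradicts your own earlier, correct, observation: each exceptional sphere meets $\bar\Lambda$ in two points, so every exceptional sphere passes through the removed neighborhood $\bar\Lambda\times D^2$ and gets punctured --- none of them survives as a closed surface in $X_\Lambda$. This is exactly where the paper's proof does its real work: it repairs each punctured exceptional sphere into a closed embedded surface in $X_\Lambda$ by tubing the two boundary circles along $\partial\nu(\bar\Lambda)$ over a path on $\bar\Lambda$, observes that the result is a torus rather than a Klein bottle precisely because the two intersection points have opposite signs, and reads off self-intersection $\pm 1$; these tori then represent the generators and exhibit the diagonal form with $b_2^+=n_+$ and $b_2^-=n_-$. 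To rescue your argument without constructing such surfaces, you would need a homological substitute: every class of $H_2(X_\Lambda)$ is represented by a cycle supported in $W$; intersection numbers of cycles lying in $W$ are computed locally and hence agree whether evaluated in $Y$ or in $X_\Lambda$; and under the isomorphism $H_2(W)\cong H_2(Y)$ these classes correspond to the exceptional classes, whose form is diagonal. As written, though, the justification you give for the form and for $b_2^\pm$ rests on a false premise and would fail.
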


\begin{proof}
We shall display $n_- + n_+$ disjoint embedded tori in $X_\Lambda$, $n_-$ of which have self-intersection $-1$ and $n_+$ of which have self-intersection $+1$.  Using a simple argument counting handles and computing Euler characteristics, it is easy then to deduce the statement of the proposition.

Each exceptional sphere $E \subset n_-\bar{\CP}^2 \# n_+\CP^2$  intersects $\bar{\Lambda}$ transversely in two points. 
Connect these two points by a path on $\bar{\Lambda}$.  The $D^2$-neighborhood of $\bar{\Lambda}$ pulls back to a trivial
$D^2$-bundle over the path.  The fibers over the two endpoints can be identified with neighborhoods on $E$.
Removing these neighborhoods from $E$ we get a sphere with two discs removed and we take the union of this
with the $S^1$ boundaries of the all the fibers of the $D^2$-bundle over the path.

This either gives a torus or a Klein bottle.  Because $E$ intersects $\bar{\Lambda}$ once positively and once negatively, we see that we in fact get a torus which has self-intersection $\pm 1$.  Finally note that we can certainly choose paths on $\bar{\Lambda}$ for each exceptional sphere which are disjoint.
\end{proof}

\section{A flat connection and the Dold-Whitney theorem}

This section considers the characteristic classes of $SO(3)$-bundles, but in fact we shall only be concerned with those bundles whose structure group can be restricted to a small subgroup of $SO(3)$.

\begin{definition}
	\label{def:klein4groupthing}
Let $V_4  \subseteq SO(3)$ be the Klein $4$-group
\[ V_4 = \left\{ \left( \begin{array}{ccc} 1 & 0 & 0 \\ 0 & 1 & 0 \\ 0 & 0 & 1 \end{array} \right), \left( \begin{array}{ccc} 1 & 0 & 0 \\ 0 & -1 & 0 \\ 0 & 0 & -1 \end{array} \right), \left( \begin{array}{ccc} -1 & 0 & 0 \\ 0 & 1 & 0 \\ 0 & 0 & -1 \end{array} \right), \left( \begin{array}{ccc} -1 & 0 & 0 \\ 0 & -1 & 0 \\ 0 & 0 & 1 \end{array} \right) \right\} { \rm .}\]
In future, we write $x_1, x_2, x_3$ for the non-identity elements.
\end{definition}

We begin with a well-known (in certain circles) lemma about a flat $SO(3)$-connection on the torus.

\begin{lemma}
	\label{lem:flat_connection_torus}
	Let $T^2$ be a torus and let $\eta : \pi_1(T^2) \rightarrow SO(3)$ be defined by $\eta(a) = x_1$ and $\eta(b) = x_2$ where $a, b$ is a basis for $\pi_1(T^2) = H_1(T^2; \Z) = \Z \oplus \Z$.  Writing $E_\eta$ for the associated (flat) $SO(3)$-bundle, we have
	\[ w_2 (E_\eta) = 1 \in H^2(T^2; \Z/2) = \Z/2 {\rm .} \]
\end{lemma}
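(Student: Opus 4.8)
The plan is to identify $w_2(E_\eta)$ with the obstruction to lifting the holonomy representation $\eta$ along the double cover $SU(2) = \Spin(3) \to SO(3)$, and then to compute this obstruction by a direct commutator calculation with quaternions.

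First I would recall the standard fact that $w_2$ of an $SO(3)$-bundle is precisely the obstruction to reducing the structure group to the double cover $\Spin(3) = SU(2)$; equivalently, $w_2 \in H^2(BSO(3); \Z/2)$ is the classifying class of the central extension $1 \to \Z/2 \to SU(2) \to SO(3) \to 1$. Since $E_\eta$ is the flat bundle associated to $\eta$, its classifying map $T^2 \to BSO(3)$ factors through $\eta \colon \pi_1(T^2) \to SO(3)$. As $T^2$ is a $K(\pi_1(T^2),1)$, we may identify $H^2(T^2; \Z/2) = H^2(\pi_1(T^2); \Z/2)$, and then $w_2(E_\eta) = \eta^*(w_2)$ is the pullback of the extension class. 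This pullback vanishes if and only if $\eta$ lifts to a homomorphism $\pi_1(T^2) \to SU(2)$.

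Next I would carry out the commutator computation. Identify $SU(2)$ with the unit quaternions in $\H$, under which the three rotations by $\pi$ about the coordinate axes, namely $x_1, x_2, x_3 \in V_4$, lift to $\pm i, \pm j, \pm k$ respectively, while the kernel $\Z/2$ of $SU(2) \to SO(3)$ becomes $\{ \pm 1 \}$. Because $a$ and $b$ commute in $\pi_1(T^2) = \Z \oplus \Z$, any lift of $\eta$ would force chosen lifts $\widetilde{x_1}, \widetilde{x_2} \in SU(2)$ of $x_1 = \eta(a)$ and $x_2 = \eta(b)$ to commute. But for any choice of signs one computes
\[ \widetilde{x_1}\,\widetilde{x_2}\,\widetilde{x_1}^{-1}\,\widetilde{x_2}^{-1} = i\,j\,i^{-1}\,j^{-1} = -1, \]
the nontrivial central element, since the signs are central and cancel in the commutator. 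Hence $\eta$ admits no lift, and therefore $\eta^*(w_2) \neq 0$.

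Since $H^2(T^2; \Z/2) = \Z/2$, the non-vanishing of the obstruction forces $w_2(E_\eta) = 1$, as claimed. The one point requiring genuine care — and the only place I would worry about an indexing or sign slip — is the identification of $w_2$ with the pullback of the extension class of $SU(2) \to SO(3)$; once that is in place, the quaternion relation $[i,j] = -1$ does all the work and the rest is formal.
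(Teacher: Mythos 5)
Your proof is correct, but it takes a genuinely different route from the paper. You identify $w_2(E_\eta)$ with the obstruction to lifting the holonomy representation along $SU(2) = \Spin(3) \to SO(3)$, and then kill any potential lift with the quaternion commutator $iji^{-1}j^{-1} = -1$ (the lifts of $x_1, x_2$ are $\pm i, \pm j$, and since the sign ambiguities are central they cancel in the commutator, so no choice of lifts commutes while $a$ and $b$ must commute in $\pi_1(T^2) = \Z \oplus \Z$). This is the standard gauge-theoretic argument --- it is exactly how one describes flat connections on the nontrivial $SO(3)$-bundle over a surface in instanton Floer theory --- and it has the virtue of generalizing immediately: it shows that for any flat $SO(3)$-bundle, $w_2$ is nonzero precisely when the holonomy fails to lift to $SU(2)$, which also recovers Remark \ref{rem:surjectivity_torus} (a non-surjective $\eta$ has cyclic image inside $V_4$, and commuting lifts are then easy to choose by hand). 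The paper instead exploits the fact that $V_4$ consists of diagonal matrices to split $E_\eta = L_1 \oplus L_2 \oplus L_3$ as a sum of flat real line bundles, computes $w_1(L_1) = \bar{a}$, $w_1(L_2) = \bar{b}$, $w_1(L_3) = \bar{a} + \bar{b}$, and applies the Whitney sum formula to get $w_2(E_\eta) = \bar{a} \cup \bar{b} = 1$. The paper's route is more elementary and self-contained, needing only the Whitney formula; your route leans on the identification of $w_2$ of a flat bundle with the group-cohomology class of the central extension $1 \to \Z/2 \to SU(2) \to SO(3) \to 1$ pulled back along $\eta$, which is true and standard but is precisely the point you flag yourself as requiring care --- if you wanted the argument to be fully self-contained you would need to justify that compatibility (e.g.\ via the fibration $BSU(2) \to BSO(3) \to K(\Z/2\Z, 2)$ and naturality under $B\eta$). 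As written, your proof is a valid and arguably more conceptual alternative.
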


\begin{proof}
	Note that the matrices of $V_4$ are all diagonal with entries in $\Z/2\Z = O(1)$.  Hence, thinking of $E_\eta$ as an $O(3)$-bundle, we can write $E_\eta = L_1 \oplus L_2 \oplus L_3$ where $L_i$ is the (flat) real line bundle determined by the representation
	\[ \pi_1(T^2) \stackrel{\eta}{\longrightarrow} V_4 \stackrel{p_i}{\longrightarrow} \Z/2\Z = O(1) {\rm ,} \]
	where $p_i$ is given by the $(ii)$ matrix entry.
	
	Each $L_i$ is the pullback of a M\"obius line bundle over a circle by a map $T^2 \rightarrow S^1$ (depending on $i$) which is a projection map onto an $S^1$ factor of $T^2$.  We compute then that
	\[ w_1(L_1) = \bar{a}, \, \, w_1(L_2) = \bar{b}, \, \,  {\rm and} \, \, \, w_1(L_3) = \bar{a} + \bar{b} {\rm ,} \]
	where we write $\bar{a}, \bar{b} \in H^1(T^2; \Z/2\Z)$ for the reductions of the Poincar\'e duals of $a$ and $b$ respectively.
	
	Then we compute via the cup-product formula for the Stiefel-Whitney class of a sum of bundles:
	\[ w_2(E_\eta) = \bar{a}\cup\bar{b} + \bar{b}\cup(\bar{a} + \bar{b}) + (\bar{a} + \bar{b})\cup\bar{a} = \bar{a} \cup \bar{b} = 1 \in H^2(T^2 ; \Z/2\Z) {\rm .}  \]
\end{proof}

\begin{remark}
	\label{rem:surjectivity_torus}
	For representations $\eta : \pi_1(T^2) \rightarrow V_4$, Lemma \ref{lem:flat_connection_torus} says that $w_2(E_\eta)$ is non-trivial exactly when $\eta$ is surjective (note that if $\eta$ is not surjective then $E_\eta$ is the pullback of a bundle over a circle).
\end{remark}

Suppose now that we are in the situation of the hypotheses of Proposition \ref{prop:gluing}.  By gluing together the two pairs of disks $(D_1, D_2)$ and $(D'_1, D'_2)$ along their boundary $L \subset S^3$, we get a $2$-component locally-flat immersed link $\Lambda \subset S^4$.  We write $\Lambda_j$ for the sphere resulting from gluing together $D_j$ and $D'_j$ for $j = 1,2$.  In performing this gluing we of course reverse the orientation of the second 4-ball.  This has the effect that positive/negative self-intersections of $(D'_1, D'_2)$ become negative/positive self-intersections of $\Lambda$ respectively.  We write $X = X_\Lambda$, and now give a flat $SO(3)$ connection on $X$.

Let $\theta : \pi_1 (X) \rightarrow SO(3)$ be a representation that factors through an onto map $\bar{\theta}: H_1(X ; \Z) = \Z \oplus \Z \rightarrow V_4$.  We define $\theta$ by setting $\bar{\theta} : m_j \mapsto x_j$ where $m_j$ is a meridian of $\Lambda_j$ for $j = 1,2$.  We write $E_\theta$ for the associated (flat) $SO(3)$-bundle over $X$.
We are interested in the characteristic classes $w_2(E_\theta) \in H^2(X;\Z/2\Z)$ and $p_1(E_\theta) \in H^4(X;\Z)$.
In the case we consider in this paper, we know immediately that $p_1(E_\theta) = 0$ since the bundle admits a flat connection.

Proposition \ref{prop:gluing} now follows by computing $w_2^2(E_\theta)$ using our basis of tori representing the second homology of $X$.

\begin{proof}[Proof of Proposition \ref{prop:gluing}]
As noted before, the content of the proposition is in the first equality sign, namely that we have
\[ w_2^2(E_\theta) = \phi(L, D_1, D_2) - \phi(L, D'_1, D'_2) \in H^4(X ; \Z/4\Z) {\rm .} \]

We compute $w_2(E_\theta) \in H^2(X ; \Z/2\Z)$ by pulling back the representation $\theta$ to each torus representing a basis element of $H_2(X ; \Z)$.
Let $T_p \subseteq X$ be a torus as constructed in Proposition \ref{prop:algtop} coming from a self-intersection point $p \in \Lambda_j$ for some $j \in \{1,2\}$.  We wish to give a pair of $H_1(T_p ; \Z)$-generating circles on $T_p$.

The first of these circles we take to be a meridian $m_p$ to $\Lambda_j$.  The other we take to be any circle $l_p$ on $T_p$ which is dual to $m_p$.  Then the restriction of $\theta$ to $\pi_1(T_p) = H_1(T_p ; \Z)$ is determined by $\bar{\theta}(m_p)$ and $\bar{\theta}(l_p)$.

We know by the definition of $\theta$ that we have $\bar{\theta}(m_p) = x_j$.  On the other hand, $\bar{\theta}(l_p)$ is determined by the class of $l_p$ in $H_1(X ; \Z/2\Z)$.  Consider $w(p)$ as given in Definition \ref{defn:whatitis}.  If we have $w(p) = 0$ then $\bar{\theta}(l_p) \in \{ 1, x_j \}$, but if $w(p) = 1$ then $\bar{\theta}(l_p) \notin \{ 1, x_j \}$.  In consequence, $\theta \vert_{\pi_1(T_p)}$ maps onto $V_4$ if and only if $w(p) = 1$.

In light of Remark \ref{rem:surjectivity_torus}, it follows that $w_2(E_\theta \vert_{T_p}) = w(p) \in \Z/2\Z = H^2(T, \Z/2\Z)$.

The equation we wish to prove then follows since, computing in $H^4(X, \Z/4\Z)$, we have
\begin{align*}
p_1(E_\theta) &= w_2^2(E_\theta) = \left( \sum_p (w_2(E_\theta)[T_p])\overline{[T_p]} \right)^2 \\
&= \sum_p (w_2(E_\theta \vert_{T_p})[T_p])(\overline{[T_p]} \cup \overline{[T_p]}) = \sum_p w(p)(\overline{[T_p]} \cup \overline{[T_p]}) \\
&= \phi(L, D_1, D_2) - \phi(L,D'_1,D'_2) {\rm ,}
\end{align*}
where we write $[T_p]$ for the fundamental class of $T_p$ and the overline denotes the Poincar\'e dual.  We use here that the Pontryagin square of the $\Z/2\Z$ reduction of an integral class is the $\Z/4\Z$ reduction of the usual square of that integral class.
\end{proof}

\begin{remark}
It is possible to give more a complicated construction along the lines above, which should extend the invariant to $2$-component links of even linking number.  This recovers the $\Z/4\Z$ reduction of the Sato-Levine invariant due to Akhmetiev and Repovs \cite{ZhRe} for this class of links.

The construction above starts with two pairs of discs $(D_1, D_2)$ and $(D'_1, D'_2)$.  In the case of a link $L$ of non-zero linking number $2n$ we start rather with two immersed concordances from $L$ to the $(2,4n)$-torus link.  These may then be glued end-to-end and the resulting immersed surface resolved by blow-up in order to give two embedded tori $\Lambda$ of self-intersection $0$ in a blow-up of $S^1 \times S^3$.  Surgery may be done on $\Lambda$ in order to give a closed $4$-manifold $X$.

The main subtleties in this new situation are in performing the surgery so that one obtains $X$ with the correct algebraic topology, and in dealing with an intersection form that is no longer diagonal.
\end{remark}

\bibliographystyle{amsplain}
\bibliography{References}

\end{document}